\definecolor{darkgray}{RGB}{64,64,64}
\definecolor{litegray}{RGB}{192,192,192}
\definecolor{green}{HTML}{0F9D58}
\definecolor{red}{HTML}{DB4437}
\tikzstyle{vertex}=[circle, draw, fill=red, inner sep=0pt, minimum width=5pt]
\tikzstyle{vtx}=[circle, draw, fill=litegray, inner sep=0pt, minimum width=5pt]
\newtheorem{theorem}{Theorem}[section]
\newtheorem{lemma}[theorem]{Lemma}
\newtheorem{proposition}[theorem]{Proposition}
\newtheorem{corollary}[theorem]{Corollary}
\theoremstyle{definition}
\theoremstyle{remark}
\newtheorem*{claim*}{Claim}
\newcommand{\PP}{\mathcal{P}}
\newcommand{\SSS}{\mathcal{S}}
\title{A bound for $1$-cross intersecting set pair systems}
\author{
  Ron Holzman\footnote{Department of Mathematics, Technion -- Israel Institute of Technology, Technion City, Haifa 3200003, Israel. Email: {\tt holzman@technion.ac.il}. Research done during a visit at the Department of Mathematics, Princeton University, partially supported by the H2020-MSCA-RISE project CoSP--GA No. 823748.}
}
\date{}
\begin{document}

\maketitle

\begin{abstract}
A well-known result of Bollob\'as says that if $\{(A_i, B_i)\}_{i=1}^m$ is a set pair system such that $|A_i| \le a$ and $|B_i| \le b$ for $1 \le i \le m$, and $A_i \cap B_j \ne \emptyset$ if and only if $i \ne j$, then $m \le {a+b \choose a}$. F\"uredi, Gy\'arf\'as and Kir\'aly recently initiated the study of such systems with the additional property that $|A_i \cap B_j| = 1$ for all $i \ne j$. Confirming a conjecture of theirs, we show that this extra condition allows an improvement of the upper bound (at least) by a constant factor.
\end{abstract}

\section{Introduction} \label{sec:introduction}

A system of $m$ pairs of finite sets $\{(A_i, B_i)\}_{i=1}^m$ is \emph{cross intersecting} if the following two conditions hold:
\begin{align}
A_i \cap B_i &= \emptyset \textrm{  for every  } 1 \le i \le m, \label{eq:disjoint}\\
A_i \cap B_j &\ne \emptyset \textrm{  for every  } 1 \le i \ne j \le m. \label{eq:intersecting}
\end{align}
The question is how large can the size $m$ of such a system be, if $|A_i| \le a$ and $|B_i| \le b$ for every $1 \le i \le m$. The \emph{standard example} of such a system is constructed by taking a ground set of $a+b$ elements, and forming all pairs of complementary sets $(A_i, B_i)$ with $|A_i| = a, |B_i| = b$; its size is $m = {a+b \choose a}$. The classical result of Bollob\'as~\cite{B} is that this is the largest size possible.

Several different proofs have been given for this result. Quite a few extensions and variants of it have been studied, and there are many applications in extremal set theory and beyond. For more information, we refer to the surveys~\cite{F, T94, T96}.

Actually, the result of Bollob\'as is more refined, taking into account sets $A_i$ and $B_i$ of different sizes. Here is the statement.

\begin{theorem}[\cite{B}] \label{thm:bollobas}
Let $\{(A_i, B_i)\}_{i=1}^m$ be a cross intersecting set pair system with $|A_i| \le a_i$ and $|B_i| \le b_i$ for every $1 \le i \le m$. Then
\[ \sum_{i=1}^m \frac{1}{{a_i + b_i \choose a_i}} \le 1, \]
and equality holds only if there exist $a$ and $b$ such that $|A_i| = a$ and $|B_i| = b$ for every $1 \le i \le m$, and the system is the standard example.
\end{theorem}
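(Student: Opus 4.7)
My plan is to apply Katona's random-permutation method. Let $X = \bigcup_{i=1}^m (A_i \cup B_i)$ and let $\pi$ be a uniformly random linear order on $X$. For each $i$, let $E_i$ be the event that every element of $A_i$ precedes every element of $B_i$ under $\pi$. Condition \eqref{eq:disjoint} ensures that $A_i$ and $B_i$ are disjoint, so the relative order of $A_i \cup B_i$ is uniform, giving
\[
\Pr(E_i) = \frac{|A_i|!\,|B_i|!}{(|A_i|+|B_i|)!} = \frac{1}{\binom{|A_i|+|B_i|}{|A_i|}} \ge \frac{1}{\binom{a_i+b_i}{a_i}},
\]
where the final step uses the strict monotonicity of $\binom{x+y}{x}$ in each of its arguments together with $|A_i| \le a_i$ and $|B_i| \le b_i$.

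The core step is pairwise disjointness of the events $E_i$. Suppose $E_i$ and $E_j$ both hold for some $i \neq j$, and by symmetry assume $\max_{a\in A_i}\pi(a) \le \max_{a\in A_j}\pi(a)$. By \eqref{eq:intersecting}, pick $x \in A_i \cap B_j$. From $x \in A_i$ we get $\pi(x) \le \max_{a\in A_i}\pi(a) \le \max_{a\in A_j}\pi(a)$, while $E_j$ combined with $x \in B_j$ gives $\pi(x) \ge \min_{b\in B_j}\pi(b) > \max_{a\in A_j}\pi(a)$, a contradiction. Summing over $i$ yields $\sum_i 1/\binom{a_i+b_i}{a_i} \le 1$.

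In the equality case, every inequality in the display above must be tight, forcing $|A_i| = a_i$ and $|B_i| = b_i$ for all $i$, and forcing the pairwise disjoint $E_i$ to partition the space of $|X|!$ permutations. To recover the standard example, I would first show that $A_i \cup B_i = X$ for every $i$: otherwise a stray element $z \in X \setminus (A_i \cup B_i)$ could be inserted into a carefully chosen permutation to produce one that the disjointness argument rules out from every $E_j$, contradicting the partition. Once this is established, $a_i + b_i = |X|$ is a common value $a+b$; a symmetric argument then pins down $a_i$ and $b_i$ as constants $a, b$. Finally, since $E_i$ consists precisely of the permutations whose first $a$ positions form the set $A_i$, the partition property forces every $a$-subset of $X$ to appear as some $A_i$ (with $B_i$ its complement), identifying the system as the standard example.

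The probability computation and the disjointness lemma are essentially routine once the permutation setup is fixed; the genuine obstacle is the equality characterization, specifically the ``no stray element'' step, which requires exhibiting a permutation that evades every $E_j$ whenever some pair fails to cover $X$.
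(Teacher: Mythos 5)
Your proof of the inequality is correct and complete, and it takes a genuinely different route from the one the paper relies on: the paper treats this as a cited result and presents only the averaging identity (Lemma~\ref{lem:induction}) that drives Bollob\'as's original induction on the size of the ground set, whereas you use Katona's permutation method. For the inequality your argument is clean --- the pairwise disjointness of the events $E_i$ is exactly right, and only weak monotonicity of $\binom{x+y}{x}$ in each argument is needed there (strictness fails when the other argument is $0$, but that degenerate case cannot occur for $m\ge 2$). The permutation method buys a short, self-contained proof of the bound with no induction; the induction approach buys a reduction framework that the paper then reuses for its main theorem.

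The gap is in the equality characterization, and it is a genuine one rather than a routine verification. Two specific problems. First, the ``no stray element'' step: you must derive a contradiction from the existence of $z\in X\setminus(A_i\cup B_i)$, and your plan is to exhibit a permutation lying in no $E_j$. At that point the only structure you have is that the $E_j$ are pairwise disjoint events whose probabilities sum to $1$; in particular, moving $z$ to any other position in a permutation $\pi\in E_i$ leaves it in $E_i$, so the naive ``insert $z$ somewhere'' idea produces nothing new, and no actual construction is given. You flag this yourself as ``the genuine obstacle,'' and as written it is an unproved claim, not a proof. Second, even granting $A_i\cup B_i=X$ for every $i$, the assertion that ``a symmetric argument then pins down $a_i$ and $b_i$ as constants'' is unsupported --- there is nothing for it to be symmetric to. A real argument is needed here; for instance, if $\pi\in E_i$ (so that $A_i$ occupies exactly the first $a_i$ positions) and one transposes positions $a_i$ and $a_i+1$, the new permutation must lie in some $E_j$, and disjointness forces $a_j=a_i$, whence $a_{i(\pi)}$ is constant because the set of permutations of $X$ is connected under adjacent transpositions. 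Your final step (once all $A_i\cup B_i=X$ and all $a_i=a$, the partition forces every $a$-subset to occur) is fine, but the two steps leading to it are missing.
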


A set pair system $\{(A_i, B_i)\}_{i=1}^m$ is $1$-\emph{cross intersecting} if condition~(\ref{eq:disjoint}) holds, and condition~(\ref{eq:intersecting}) holds in the stronger form
\begin{equation} \label{eq:one}
|A_i \cap B_j| = 1 \textrm{  for every  } 1 \le i \ne j \le m.
\end{equation}
F\"uredi, Gy\'arf\'as and Kir\'aly~\cite{FGK} recently introduced this concept. They sought an improvement, under this strengthening, of the upper bound $m \le {a+b \choose a}$, where $|A_i| \le a$ and $|B_i| \le b$ for every $1 \le i \le m$. Clearly, when $a$ or $b$ is $1$ the strengthening has no effect. They obtained a sharp upper bound whenever $a$ or $b$ is $2$; in this case, when the other parameter is large, the strengthening improves the bound by a factor of roughly $\frac{1}{2}$. For illustration and future reference, we state the special case $a=b=2$ of their result.

\begin{proposition}[\cite{FGK}] \label{prop:fgk}
Let $\{(A_i, B_i)\}_{i=1}^m$ be a $1$-cross intersecting set pair system with $|A_i| \le 2$ and $|B_i| \le 2$ for every $1 \le i \le m$. Then $m \le 5$, and equality holds only if $\{A_i\}_{i=1}^5$ and $\{B_i\}_{i=1}^5$ form two complementary $5$-cycles (that is, the vertices may be written as $0,1,2,3,4 \mod 5$, so that $A_i = \{i, i+1\}$ and $B_i = \{i-1, i+2\}$ for $1 \le i \le 5$).
\end{proposition}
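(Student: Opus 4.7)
My plan is to reduce to the uniform case $|A_i|=|B_i|=2$, then view the $A_i$'s and $B_i$'s as edges of two graphs $G_A$ and $G_B$ on a common vertex set, and exploit the $1$-intersection condition through a local analysis around an arbitrary fixed pair. The degenerate cases are easy: if $A_i = \emptyset$ then $m=1$; if $A_i = \{u\}$, then $u \in B_j$ for every $j\ne i$, so each such $B_k$ has the form $\{u, y_k\}$, and the condition $|A_j \cap B_k|=1$ forces every $y_k$ into every $A_j$ with $j \ne i,k$. This gives $|A_j| \ge m-2$ and hence $m \le 4$. The same holds if some $|B_i| \le 1$, so to reach $m = 5$ we may assume all sets have size exactly $2$.

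The main bound comes from the following local analysis. Fix $A_1=\{u,v\}$ and partition $\{2,\dots,m\} = I_u \sqcup I_v$ according to which of $u,v$ lies in $B_j$. For $j,k\in I_u$ with $B_k = \{u,w_k\}$, the fact that $u \notin A_j$ (since $u \in B_j$) combined with $|A_j \cap B_k| = 1$ forces $w_k \in A_j$; moreover the $w_k$ are distinct since otherwise two $B_k$'s would coincide. The crucial claim is $|I_u| \le 2$: if instead $\{j_1,j_2,j_3\} \subseteq I_u$, each $A_{j_\ell}$ would be forced to equal the pair $\{w_{j_i} : i \ne \ell\}$, yielding a triangle on $\{w_{j_1},w_{j_2},w_{j_3}\}$. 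Then the set $B_1$ must meet each of the three pairs $A_{j_1},A_{j_2},A_{j_3}$ in exactly one element; letting $(\alpha,\beta,\gamma)$ be the indicator vector of $B_1 \cap \{w_{j_1},w_{j_2},w_{j_3}\}$, the equations $\beta+\gamma=\alpha+\gamma=\alpha+\beta=1$ sum to $2(\alpha+\beta+\gamma)=3$, a contradiction. Symmetrically $|I_v|\le 2$, so $m-1 = |I_u|+|I_v| \le 4$, giving $m \le 5$.

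For equality $m=5$, both $|I_u|$ and $|I_v|$ equal $2$, so every vertex in $A_1$ has degree exactly $2$ in $G_B$. Since $A_1$ was arbitrary, every vertex of $V_A := \bigcup_i A_i$ has $G_B$-degree $2$, and symmetrically every vertex of $V_B := \bigcup_i B_i$ has $G_A$-degree $2$. Both degrees being positive forces $V_A = V_B =: V$; combined with the edge count $|E(G_A)| = |E(G_B)| = 5$ and $2$-regularity, one gets $|V|=5$, so $G_A$ and $G_B$ are each copies of $C_5$ on $V$. Since no $A_i$ coincides with any $B_j$ (else their intersection would have size $2$), the ten edges of $G_A$ and $G_B$ partition $E(K_5)$, forcing $G_B$ to be the complementary $5$-cycle of $G_A$. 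The pairing condition $A_i \cap B_i = \emptyset$ then matches each edge of $G_A$ to the unique edge of $G_B$ vertex-disjoint from it, recovering the configuration in the statement. The main technical obstacle is the case $|I_u|=3$: the forced triangle on $A_{j_1},A_{j_2},A_{j_3}$ is internally consistent, and the contradiction only surfaces through the constraints imposed on $B_1$ by the $1$-intersection condition.
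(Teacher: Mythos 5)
The paper does not actually prove this proposition --- it is quoted from F\"uredi, Gy\'arf\'as and Kir\'aly~\cite{FGK} and used as a black box --- so there is no in-paper argument to compare against. Your proof is correct and self-contained, which is a genuine addition. The key steps all check out: the reduction to the uniform case (with the minor gloss that some $B_k$ could be the singleton $\{u\}$ rather than $\{u,y_k\}$, but that subcase collapses immediately since then $A_j\cap B_k=\emptyset$ for $j\ne i,k$); the distinctness of the $w_k$ via the observation that $B_k=B_{k'}$ would force $A_k\cap B_{k'}=A_k\cap B_k=\emptyset$; the bound $|I_u|\le 2$, where $|I_u|\ge 4$ is impossible for cardinality reasons and $|I_u|=3$ dies by the parity argument $2(\alpha+\beta+\gamma)=3$ applied to $B_1$ against the forced triangle; and the equality analysis, where $2$-regularity of both graphs on a common $5$-point ground set, together with the fact that no $A_i$ can equal any $B_j$, forces two complementary $5$-cycles, and the disjointness $A_i\cap B_i=\emptyset$ pins down the pairing $B_i=\{i-1,i+2\}$ uniquely. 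This is an elementary double-counting/parity argument local to a single pair, and it recovers both the bound $m\le 5$ and the uniqueness of the extremal configuration exactly as stated.
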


Beyond the cases mentioned above, when $a$ and $b$ are both greater than $2$, the maximum size of a $1$-cross intersecting system with $|A_i| \le a$ and $|B_i| \le b$, is not known. Here we prove the following upper bound which, like Theorem~\ref{thm:bollobas}, accounts for different set sizes.

\begin{theorem} \label{thm:mine}
Let $a_i, b_i \ge 2$ for $1 \le i \le m$, and let $\{(A_i, B_i)\}_{i=1}^m$ be a $1$-cross intersecting set pair system with $|A_i| \le a_i$ and $|B_i| \le b_i$ for every $1 \le i \le m$. Then
\[ \sum_{i=1}^m \frac{1}{{a_i + b_i \choose a_i}} \le \frac{29}{30}. \]
\end{theorem}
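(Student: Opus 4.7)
I would begin from the standard permutation proof of Bollob\'as's theorem (Theorem~\ref{thm:bollobas}). Let $U=\bigcup_{i=1}^m (A_i\cup B_i)$, let $\pi$ be a uniformly random linear order on $U$, and let $E_i$ be the event that every element of $A_i$ precedes every element of $B_i$ in $\pi$. The usual argument, using $A_j\cap B_i\ne\emptyset$ for $i\ne j$, shows that the $E_i$ are pairwise disjoint and satisfy $\Pr[E_i]=1/\binom{a_i+b_i}{a_i}$, so $\sum_i\Pr[E_i]\le 1$. To upgrade this bound to $29/30$, it suffices to produce an event $G$, disjoint from $\bigcup_i E_i$, with $\Pr[G]\ge 1/30$.

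The 1-cross intersecting condition says that for each ordered pair $i\ne j$ the intersection $A_i\cap B_j$ consists of a unique element $x_{ij}$, and these link elements tie the system together rigidly. The arithmetic $29/30=25/30+4/30$ suggests that the extremal regime is close to the complementary 5-cycle of Proposition~\ref{prop:fgk}, whose five $(2,2)$-pairs contribute exactly $25/30$. A quick parity check on the 5-cycle already illustrates the kind of global obstruction I would try to harness: each element of its universe lies in exactly two of the $B_i$'s, so for a hypothetical sixth pair $(A_{m+1},B_{m+1})$ satisfying $|A_{m+1}\cap B_i|=1$ for $i=1,\ldots,5$, we would need $2\,|A_{m+1}\cap\{0,\ldots,4\}|=5$, which is impossible. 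In general terms, my plan is either to construct $G$ directly from local order patterns around the $x_{ij}$ that are incompatible with \emph{every} $E_\ell$ (uniqueness of the link element, rather than mere nonemptiness of $A_i\cap B_j$, is what would make this simultaneous exclusion possible), or to split the analysis into $(2,2)$-pairs, controlled via Proposition~\ref{prop:fgk} by at most $5/6$, and pairs with $a_i+b_i\ge 5$, each of weight at most $1/\binom{5}{2}=1/10$, with the elements $x_{ij}$ coupling the two groups.

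\textbf{Main obstacle.} Getting the exact constant $29/30$ is delicate: the naive combination of Proposition~\ref{prop:fgk} with Theorem~\ref{thm:bollobas} applied to the remaining pairs only gives $\le 1$, so extracting an extra $1/30$ requires a genuinely global argument that uses the rigidity of the $x_{ij}$. I expect the hardest step is to set up a single clean weighted inequality that delivers the $1/30$ of slack uniformly in the sizes $a_i,b_i$, rather than a fragile case analysis that accumulates losses. Designing an explicit exclusion event $G$ whose probability can be computed in closed form and shown to be at least $1/30$ in every configuration — or, equivalently, finding a one-line algebraic identity that absorbs the $1/30$ — is where I expect the real work to lie.
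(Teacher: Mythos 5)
Your proposal is a plan, not a proof: the step that would actually deliver the improvement from $1$ to $\frac{29}{30}$ is exactly the step you defer. In the permutation framework, the events $E_i$ for the full standard example partition the whole sample space, so any exclusion event $G$ with $\Pr[G]\ge\frac{1}{30}$ must be manufactured entirely out of the condition $|A_i\cap B_j|=1$; you never construct such a $G$, never verify its disjointness from every $E_\ell$, and never compute its probability. Your fallback — splitting into $(2,2)$-pairs bounded by Proposition~\ref{prop:fgk} and larger pairs bounded by Bollob\'as — also does not work as stated: Proposition~\ref{prop:fgk} controls the \emph{number} of $(2,2)$-pairs but says nothing about how their presence constrains the remaining pairs, and as you yourself note, applying Theorem~\ref{thm:bollobas} to the rest only recovers $\Sigma(\SSS)\le 1$. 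The "coupling via the link elements $x_{ij}$" that is supposed to bridge the two groups is named but never carried out, and the parity observation about the $5$-cycle, while correct, only handles one very special configuration.

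For comparison, the paper does not use the permutation proof at all. It runs the other classical proof of Bollob\'as's theorem — induction on the ground set via the averaging identity of Lemma~\ref{lem:induction} — and locates all the gain in the base of the induction. If every $|A_i|\ge 3$ (or every $|B_i|\ge 3$), the reduced systems still satisfy the hypotheses and minimality closes the induction; otherwise some set has size $2$, and the single link element $x\in A_k\cap B_\ell$ forces a partition of the remaining indices into three explicit classes according to which of $x,y,z$ each pair uses. Each class, after reduction, is cross intersecting with both sets shrunk by one, so Theorem~\ref{thm:bollobas} plus the elementary ratio bound $\binom{a+b-2}{a-1}/\binom{a+b}{a}\le\frac{1}{3}$ (improvable to $\frac{3}{10}$ unless $a=b=2$, Lemma~\ref{lem:ratio}) caps its contribution at $\frac{1}{3}$; the borderline subcases where everything is tight are finished off by the classification in Proposition~\ref{prop:fgk}. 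This is precisely the "fragile case analysis that accumulates losses" you hoped to avoid, but it is where the $\frac{1}{30}$ actually comes from. To salvage your approach you would need to exhibit the event $G$ explicitly and prove $\Pr[G]\ge\frac{1}{30}$ for every admissible system, which is an open-ended task your write-up does not begin.
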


\begin{corollary} \label{cor:size}
Let $a,b \ge 2$ and let $\{(A_i, B_i)\}_{i=1}^m$ be a $1$-cross intersecting set pair system with $|A_i| \le a$ and $|B_i| \le b$ for every $1 \le i \le m$. Then $m \le \frac{29}{30} {a+b \choose a}$.
\end{corollary}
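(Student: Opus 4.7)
The Corollary is an immediate specialization of Theorem~\ref{thm:mine}: setting $a_i = a$ and $b_i = b$ for every $i$ collapses the left-hand side of the theorem's inequality to $m/\binom{a+b}{a}$, so $m/\binom{a+b}{a} \le 29/30$ rearranges to the stated bound. Hence any substantive plan has to address Theorem~\ref{thm:mine}, and I sketch one below.

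The natural starting point is the random-permutation proof of Bollob\'as's inequality (Theorem~\ref{thm:bollobas}). Embed all the $A_i, B_i$ in a common finite ground set $N$, let $\pi$ be a uniformly random linear order of $N$, and call pair $i$ \emph{good} (for $\pi$) if every element of $A_i$ precedes every element of $B_i$ under $\pi$. Condition~(\ref{eq:intersecting}) guarantees the events $G_i = \{\pi : \text{pair } i \text{ good}\}$ are pairwise disjoint, and a direct count gives $\Pr(G_i) = 1/\binom{|A_i|+|B_i|}{|A_i|} \ge 1/\binom{a_i+b_i}{a_i}$. Summing and using $\Pr(\bigcup_i G_i) \le 1$ recovers the classical bound. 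To push this down to $29/30$ it is enough to exhibit an event $\mathcal{B}$, disjoint from every $G_i$, with $\Pr(\mathcal{B}) \ge 1/30$.

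The construction of $\mathcal{B}$ is where the stronger hypothesis~(\ref{eq:one}) must be used. For each ordered pair $i \ne j$ it singles out a unique element $x_{ij} \in A_i \cap B_j$; together with extra elements $\alpha_i \in A_i \setminus \{x_{ij}\}$ and $\beta_i \in B_i \setminus \{x_{ji}\}$, which exist because $a_i, b_i \ge 2$, this pins down a constant-size sub-configuration on which all ``local'' orderings can be enumerated. A careful case analysis should identify a family of orderings of these distinguished elements that forces \emph{every} pair $k$ to be non-good. The target constant $1/30 = (1/5)(1/6)$ is consistent with this picture: $1/6 = 1/\binom{4}{2}$ is the Bollob\'as bound at $a=b=2$ and the factor $1/5$ matches the $5$-cycle deficit of Proposition~\ref{prop:fgk}, suggesting the extremal behaviour is rooted in that pentagon example.

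The main obstacle, as is typical in such arguments, is aggregation: the bad orderings produced locally by different pairs $(i,j)$ overlap in intricate ways, and a naive union bound will fall short of $1/30$. I would attempt either a canonical-witness scheme, assigning each bad permutation to the (say) lexicographically first pair that kills it, or an averaging/LP-duality argument that redistributes the local deficits without double counting. An alternative avenue is a skew linear-algebraic version of Bollob\'as in the spirit of Frankl--Lov\'asz, where the equality $|A_i \cap B_j| = 1$ forces extra algebraic identities among polynomials or tensors associated to the pairs and yields a rank deficit of the required magnitude.
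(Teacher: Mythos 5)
Your first paragraph is exactly how the paper obtains Corollary~\ref{cor:size}: it is the specialization $a_i=a$, $b_i=b$ of Theorem~\ref{thm:mine}, and that step is correct and complete. The issue is everything after it. You rightly observe that the substance lies in Theorem~\ref{thm:mine}, but what you offer for it is a program, not a proof, and the program's central step is missing. In the random-permutation framework, the whole content of the theorem is the existence of an event $\mathcal{B}$ with $\Pr(\mathcal{B})\ge 1/30$ disjoint from every $G_i$; you do not construct such an event, and you yourself flag that the local ``bad orderings'' coming from different pairs $(i,j)$ overlap in ways that defeat a naive union bound. Neither the canonical-witness scheme nor the LP-duality redistribution is carried out, so the gap is precisely the theorem itself. (One concrete warning sign: any valid $\mathcal{B}$ must vanish for the standard example, which satisfies~\eqref{eq:intersecting} but not~\eqref{eq:one}, so $\mathcal{B}$ cannot be defined from the skeleton $\{x_{ij},\alpha_i,\beta_i\}$ alone without using the uniqueness in~\eqref{eq:one} in an essential, global way.)

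For comparison, the paper proves Theorem~\ref{thm:mine} by a different route: induction on the size of the ground set, following Bollob\'as's original argument. The induction step is the averaging identity of Lemma~\ref{lem:induction} (delete one ground element at a time), and it goes through verbatim whenever all the $A_i$ (or all the $B_i$) have size at least $3$. The gain of $1/30$ is extracted entirely at the ``base'': when some $|A_k|=|B_\ell|=2$, the single intersection point $A_k\cap B_\ell=\{x\}$ forces a three-way partition of the remaining indices, each part of which reduces (after deleting two or three distinguished elements) to a plain cross intersecting system controlled by Theorem~\ref{thm:bollobas}; the ratio bound of Lemma~\ref{lem:ratio} converts those bounds back, and Proposition~\ref{prop:fgk} handles the residual configurations where too many pairs have both sets of size $2$. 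Your probabilistic reformulation of Theorem~\ref{thm:bollobas} is fine as far as it goes, but as it stands the proposal establishes the corollary only conditionally on a theorem it does not prove.
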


The case $a=b$ of the corollary confirms a conjecture of F\"uredi, Gy\'arf\'as and Kir\'aly~\cite{FGK}: they postulated the existence of a positive $\varepsilon$ such that $m \le (1 - \varepsilon) {2n \choose n}$ for every $1$-cross intersecting system $\{(A_i, B_i)\}_{i=1}^m$ with $|A_i| \le n$ and $|B_i| \le n$ for every $1 \le i \le m$, and every $n \ge 2$. It seems likely that our constant $\frac{29}{30}$ can be improved to $\frac{5}{6}$, which would be best possible in view of Proposition~\ref{prop:fgk}. Another plausible conjecture in~\cite{FGK} says that the constant can be made arbitrarily small if $n$ is large enough. One could even conjecture an upper bound of the form $C^n$ on the size $m$ of a $1$-cross intersecting system with sets of size $n$, where $C$ is a constant less than $4$. The best construction known (see~\cite{FGK}) shows that $C$ must be at least $\sqrt{5}$.

As pointed out in~\cite{FGK}, the notion of $1$-cross intersecting set pair systems is closely related to the much studied topic of clique and biclique partitions of graphs. We end the introduction with a reformulation of our result in that terminology. A \emph{biclique partition} of a graph $G = (V, E)$ is a partition $\PP$ of its edge set $E$ into edge sets of complete bipartite graphs (bicliques). For such a partition $\PP$ and a vertex $v \in V$, we denote by $\PP[v]$ the family of bicliques in $\PP$ containing the vertex $v$. The \emph{thickness} of $\PP$ at $v$, denoted by $t_{\PP}(v)$, is $|\PP[v]|$. The graph $B_{2m}$ is obtained from the complete bipartite graph $K_{m,m}$ by removing a perfect matching. That is, $B_{2m} = (V, E)$ where
\begin{align*}
V & = \{x_1, x_2, \ldots , x_m\} \cup \{y_1, y_2, \ldots , y_m\},\\
E & = \{\{x_i, y_j\}:\, 1 \le i \ne j \le m\}.
\end{align*}

Given a biclique partition $\PP$ of the graph $B_{2m}$, the set pair system $\{(\PP[x_i], \PP[y_i])\}_{i=1}^m$ is $1$-cross intersecting. This observation leads to the following reformulation of our main result.

\begin{corollary} \label{cor:graph}
Let $a_i, b_i \ge 2$ for $1 \le i \le m$, and let $\PP$ be a biclique partition of the graph $B_{2m}$ with thickness $t_{\PP}(x_i) \le a_i$ and $t_{\PP}(y_i) \le b_i$ for every $1 \le i \le m$. Then
\[ \sum_{i=1}^m \frac{1}{{a_i + b_i \choose a_i}} \le \frac{29}{30}. \]
In particular, if $a, b \ge 2$ and $B_{2m}$ admits a biclique partition with thickness at most $a$ at every $x_i$ and at most $b$ at every $y_i$, then $m \le \frac{29}{30} {a+b \choose a}$.
\end{corollary}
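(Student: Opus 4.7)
The corollary is a direct reformulation of Theorem~\ref{thm:mine} in the language of biclique partitions, and my plan is simply to spell out the translation already hinted at in the paragraph immediately preceding the statement.

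Given a biclique partition $\PP$ of $B_{2m}$, I would set $A_i = \PP[x_i]$ and $B_i = \PP[y_i]$ for $1 \le i \le m$, and verify that $\{(A_i, B_i)\}_{i=1}^m$ is a $1$-cross intersecting set pair system. First, $A_i \cap B_i = \emptyset$ because the vertices $x_i$ and $y_i$ are non-adjacent in $B_{2m}$: if some biclique $K \in \PP$ contained both, then $K$, being a complete bipartite subgraph with $x_i$ on one side and $y_i$ on the other, would contain the non-existent edge $\{x_i, y_i\}$, a contradiction. Second, for $i \ne j$ the edge $\{x_i, y_j\}$ is in $E(B_{2m})$, so it is covered by exactly one biclique of the partition $\PP$; this biclique is the unique element of $\PP[x_i] \cap \PP[y_j] = A_i \cap B_j$, yielding the strict equality $|A_i \cap B_j| = 1$.

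Since the thickness hypotheses translate to $|A_i| = t_\PP(x_i) \le a_i$ and $|B_i| = t_\PP(y_i) \le b_i$ with $a_i, b_i \ge 2$, Theorem~\ref{thm:mine} applies to $\{(A_i, B_i)\}_{i=1}^m$ and gives $\sum_{i=1}^m 1/\binom{a_i+b_i}{a_i} \le 29/30$. The uniform-size ``in particular'' clause will then follow by specializing $a_i = a$ and $b_i = b$ for every $i$, in which case the sum becomes $m/\binom{a+b}{a}$ and rearrangement yields $m \le \frac{29}{30}\binom{a+b}{a}$. I do not foresee any real obstacle: the entire combinatorial content of the corollary is supplied by Theorem~\ref{thm:mine}, and what remains is bookkeeping of the dictionary between biclique partitions of $B_{2m}$ and $1$-cross intersecting set pair systems.
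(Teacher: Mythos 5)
Your proposal is correct and is exactly the argument the paper intends: the paper leaves the corollary as an immediate consequence of the observation that $\{(\PP[x_i],\PP[y_i])\}_{i=1}^m$ is $1$-cross intersecting, and your verification of conditions~(\ref{eq:disjoint}) and~(\ref{eq:one}) via the completeness of each biclique (forcing the edge $\{x_i,y_j\}$ to lie in any biclique containing both endpoints) is the right way to fill in that observation before invoking Theorem~\ref{thm:mine}.
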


\section{Proof} \label{sec:proof}
In preparation for the proof of Theorem~\ref{thm:mine}, we start with some notations and lemmas.

Let $\SSS = \{(A_i, B_i)\}_{i \in I}$ be a set pair system, where $I$ is a finite index set. We write $V(\SSS) = \bigcup_{i \in I} (A_i \cup B_i)$ for the ground set of the system $\SSS$. Given a subset $R \subseteq V(\SSS)$, the \emph{reduction} of $\SSS$ by $R$, denoted by $\SSS - R$, is the set pair system
\[ \SSS - R = \{(A_i \setminus R, B_i \setminus R)\}_{i \in I}. \]
We may use this notation also when the set $R$ is not contained in $V(\SSS)$, with the understanding that $\SSS - R = \SSS - (R \cap V(\SSS))$. The following lemma, stated here for future reference, is an immediate consequence of the definitions.

\begin{lemma} \label{lem:reduction}
Let $\SSS = \{(A_i, B_i)\}_{i \in I}$ be a $1$-cross intersecting set pair system with ground set $V(\SSS)$. Let $R \subseteq V(\SSS)$ be such that there are no $v \in R$ and $i \ne j \in I$ with $v \in A_i \cap B_j$. Then the reduced system $\SSS - R$ is a $1$-cross intersecting set pair system with ground set $V(\SSS) \setminus R$.
\end{lemma}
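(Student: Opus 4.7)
The plan is to verify directly, from the definition of the reduction, that each of the three ingredients in the definition of a $1$-cross intersecting set pair system with prescribed ground set---the within-pair disjointness \eqref{eq:disjoint}, the unique cross intersection \eqref{eq:one}, and the identification of the ground set---transfers from $\SSS$ to $\SSS - R$.

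The only step that actually uses the hypothesis on $R$ is the verification of \eqref{eq:one}. For distinct $i, j \in I$ I would write
\[ (A_i \setminus R) \cap (B_j \setminus R) = (A_i \cap B_j) \setminus R \]
and then invoke the assumption that no $v \in R$ lies in any $A_i \cap B_j$ with $i \ne j$ to conclude that $R \cap (A_i \cap B_j) = \emptyset$, so this intersection equals $A_i \cap B_j$, which has size $1$ because $\SSS$ is $1$-cross intersecting.

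The remaining two checks are routine. For \eqref{eq:disjoint}, I would just note $(A_i \setminus R) \cap (B_i \setminus R) \subseteq A_i \cap B_i = \emptyset$. For the ground set, any $v \in V(\SSS) \setminus R$ lies in some $A_j$ or $B_j$ of $\SSS$ and is not removed, so it survives in the corresponding reduced set, giving $V(\SSS - R) \supseteq V(\SSS) \setminus R$; the reverse inclusion is immediate from the definition of $\SSS - R$. There is no genuine obstacle here: the lemma is a bookkeeping statement whose content is precisely to isolate, for later use, the one condition a set of ``erasable'' elements must satisfy in order for the reduction to remain $1$-cross intersecting.
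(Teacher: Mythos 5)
Your verification is correct and complete: the set identity $(A_i \setminus R) \cap (B_j \setminus R) = (A_i \cap B_j)\setminus R$ together with the hypothesis on $R$ gives condition~(\ref{eq:one}), and the disjointness and ground-set checks are routine, exactly as you say. The paper offers no proof at all, declaring the lemma ``an immediate consequence of the definitions,'' and your argument is precisely the direct verification that claim alludes to.
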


Let $\SSS = \{(A_i, B_i)\}_{i \in I}$ be a set pair system. We use the short-hand notation $\Sigma(\SSS)$ for the sum
\[ \Sigma(\SSS) = \sum_{i \in I} \frac{1}{{|A_i| + |B_i| \choose |A_i|}}. \]
If $J \subseteq I$ is a subset of the index set, we write $\SSS[J]$ for the corresponding subsystem of $\SSS$, i.e.,
\[ \SSS[J] = \{(A_i, B_i)\}_{i \in J}. \]

The next lemma is the essence of the original proof of Theorem~\ref{thm:bollobas}, by induction on the size of the ground set. We state it here in the form that we will use, and provide the proof for completeness. For a system $\SSS = \{(A_i, B_i)\}_{i \in I}$ and an element $v \in V(\SSS)$ we consider two subsystems:
\[ \SSS[I^A_{\bar{v}}] \textrm{  where  } I^A_{\bar{v}} = \{i \in I:\, v \notin A_i\}, \]
and similarly
\[ \SSS[I^B_{\bar{v}}] \textrm{  where  } I^B_{\bar{v}} = \{i \in I:\, v \notin B_i\}. \]
The reductions of these subsystems by $\{v\}$ appear in the lemma.

\begin{lemma} \label{lem:induction}
Let $\SSS = \{(A_i, B_i)\}_{i \in I}$ be a set pair system such that $A_i \ne \emptyset, B_i \ne \emptyset, A_i \cap B_i = \emptyset$ for every $i \in I$. Then
\[ \Sigma(\SSS) = \frac{1}{|V(\SSS)|} \sum_{v \in V(\SSS)} \Sigma(\SSS[I^A_{\bar{v}}] - \{v\}) \le \max_{v \in V(\SSS)} \Sigma(\SSS[I^A_{\bar{v}}] - \{v\}), \]
and similarly
\[ \Sigma(\SSS) = \frac{1}{|V(\SSS)|} \sum_{v \in V(\SSS)} \Sigma(\SSS[I^B_{\bar{v}}] - \{v\}) \le \max_{v \in V(\SSS)} \Sigma(\SSS[I^B_{\bar{v}}] - \{v\}). \]
\end{lemma}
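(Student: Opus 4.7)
My plan is to prove the equality directly by expanding the right-hand side and switching the order of summation; the inequality is then immediate, since an average is at most a maximum. Only the first statement (involving $I^A_{\bar{v}}$) needs an argument, as the second is obtained by swapping the roles of $A$ and $B$ (this is why the hypothesis requires both $A_i \ne \emptyset$ and $B_i \ne \emptyset$).

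First I would expand
\[
\sum_{v \in V(\SSS)} \Sigma(\SSS[I^A_{\bar{v}}] - \{v\}) = \sum_{v \in V(\SSS)} \sum_{\substack{i \in I \\ v \notin A_i}} \frac{1}{\binom{|A_i| + |B_i \setminus \{v\}|}{|A_i|}},
\]
where I used $A_i \setminus \{v\} = A_i$ for $v \notin A_i$. Swap the order of summation and fix $i \in I$. Using $A_i \cap B_i = \emptyset$, the set $\{v \in V(\SSS) : v \notin A_i\}$ splits cleanly into the $|B_i|$ elements of $B_i$ (each contributing $1/\binom{|A_i|+|B_i|-1}{|A_i|}$, since removing such a $v$ shrinks $B_i$ by one) and the $|V(\SSS)| - |A_i| - |B_i|$ elements outside $A_i \cup B_i$ (each contributing $1/\binom{|A_i|+|B_i|}{|A_i|}$). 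So the total contribution of index $i$ is
\[
\frac{|B_i|}{\binom{|A_i|+|B_i|-1}{|A_i|}} + \frac{|V(\SSS)| - |A_i| - |B_i|}{\binom{|A_i|+|B_i|}{|A_i|}}.
\]

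Next, I would apply the elementary identity
\[
\frac{|B_i|}{\binom{|A_i|+|B_i|-1}{|A_i|}} = \frac{|A_i|+|B_i|}{\binom{|A_i|+|B_i|}{|A_i|}},
\]
which holds because both sides equal $|A_i|!\,|B_i|!/(|A_i|+|B_i|-1)!$ (this is where $|B_i| \ge 1$ enters). Plugging this in, the contribution of index $i$ collapses to $|V(\SSS)|/\binom{|A_i|+|B_i|}{|A_i|}$. Summing over $i \in I$ and dividing by $|V(\SSS)|$ gives exactly $\Sigma(\SSS)$, which is the desired equality. The inequality is then trivial.

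There is no genuine obstacle here; the entire content of the lemma is the binomial identity above combined with the case analysis enabled by $A_i \cap B_i = \emptyset$. What makes the statement useful in the sequel is not the equality itself but the reformulation it provides: it lets one bound $\Sigma(\SSS)$ by the value of $\Sigma$ on a reduced system with a smaller ground set, opening the door to an induction on $|V(\SSS)|$ in the spirit of Bollob\'as's original argument.
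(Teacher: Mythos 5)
Your proof is correct and follows essentially the same route as the paper's: expand the right-hand side, swap the order of summation, split the elements $v \notin A_i$ into those in $B_i$ and those outside $A_i \cup B_i$, and collapse the contribution of each index $i$ to $|V(\SSS)|/\binom{|A_i|+|B_i|}{|A_i|}$. The only difference is that you make explicit the binomial identity that the paper leaves as an unstated computation.
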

\begin{proof} By symmetry, it suffices to prove the first statement. The inequality is obvious: the average cannot exceed the maximum. We only need to prove the identity
\[ \Sigma(\SSS) = \frac{1}{|V(\SSS)|} \sum_{v \in V(\SSS)} \Sigma(\SSS[I^A_{\bar{v}}] - \{v\}). \]
Fix an $i \in I$ and consider its contribution to the sum on the right-hand side. For every $v \in B_i$, it contributes $\frac{1}{{|A_i| + |B_i| - 1 \choose |A_i|}}$, and for every $v \in V(\SSS) \setminus (A_i \cup B_i)$ it contributes $\frac{1}{{|A_i| + |B_i| \choose |A_i|}}$. Thus the total contribution is
\[ \frac{|B_i|}{{|A_i| + |B_i| - 1 \choose |A_i|}} + \frac{|V(\SSS)| - |A_i| - |B_i|}{{|A_i| + |B_i| \choose |A_i|}} = \frac{|V(\SSS)|}{{|A_i| + |B_i| \choose |A_i|}}. \]
Upon dividing by $|V(\SSS)|$, this equals the contribution of $i$ to $\Sigma(\SSS)$, which proves the identity.
\end{proof}

We will also need the following simple bound on the ratio between certain binomial coefficients.

\begin{lemma} \label{lem:ratio}
For $a, b \ge 2$ we have
\[ \frac{{a+b-2 \choose a-1}}{{a+b \choose a}} \le \frac{1}{3}. \]
Moreover, the upper bound may be improved to $\frac{3}{10}$ unless $a=b=2$.
\end{lemma}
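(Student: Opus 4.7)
The plan is to reduce the ratio of binomials to a simple rational expression in $a$ and $b$ and then verify the two inequalities by direct algebra, with a very short case check for the improved bound.

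First, expanding factorials gives
\[ \frac{\binom{a+b-2}{a-1}}{\binom{a+b}{a}} = \frac{(a+b-2)!\,a!\,b!}{(a-1)!\,(b-1)!\,(a+b)!} = \frac{ab}{(a+b)(a+b-1)}. \]
The first inequality is then equivalent to $3ab \le (a+b)(a+b-1)$, i.e.\ to
\[ a^2 - ab + b^2 - a - b \ge 0. \]
I would view this as a quadratic in $a$ with $b$ fixed: its value at $a=2$ equals $(b-1)(b-2) \ge 0$, and its leading coefficient is positive. For $b=2$ the quadratic factors as $(a-1)(a-2)$, clearly non-negative for $a \ge 2$ with equality exactly at $a=2$; for $b \ge 3$ the discriminant $-3b^2+6b+1$ is negative so the expression is strictly positive everywhere. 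This gives the $\tfrac{1}{3}$ bound and isolates $a=b=2$ as the unique equality case.

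For the improved bound $\tfrac{3}{10}$ the inequality becomes $10ab \le 3(a+b)(a+b-1)$, i.e.\
\[ 3a^2 + 3b^2 - 4ab - 3a - 3b \ge 0. \]
The useful rewrite here is
\[ 3a^2 + 3b^2 - 4ab - 3a - 3b = 2(a-b)^2 + a(a-3) + b(b-3), \]
which is manifestly non-negative as soon as $a,b \ge 3$. The only remaining case (up to symmetry) is $a=2$ and $b \ge 3$; substituting reduces the claim to the single-variable inequality $3b^2 - 11b + 6 \ge 0$, which is easily verified for $b \ge 3$ (with equality at $b=3$). Finally, the excluded case $a=b=2$ gives value $-4 < 0$, matching the fact that the ratio there equals exactly $\tfrac{1}{3} > \tfrac{3}{10}$.

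No step is genuinely difficult; the only thing to get right is the algebraic rearrangement $2(a-b)^2 + a(a-3) + b(b-3)$, which makes both the general case $a,b \ge 3$ and the isolated edge case $a=2$, $b \ge 3$ transparent.
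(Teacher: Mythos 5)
Your proof is correct and follows essentially the same route as the paper: both reduce the ratio to $\frac{ab}{(a+b)(a+b-1)}$ and then verify the two bounds by elementary algebra, treating the case $a=2$, $b\ge 3$ separately for the $\frac{3}{10}$ bound. The only difference is cosmetic — the paper uses $(a+b)^2\ge 4ab$ together with $ab\ge a+b$, while you use a discriminant argument and the decomposition $2(a-b)^2+a(a-3)+b(b-3)$.
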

\begin{proof} The ratio in the lemma is equal to $\frac{ab}{(a+b)(a+b-1)}$. The upper bound of $\frac{1}{3}$ follows from
\[ (a+b)^2 \ge 4ab \ge 3ab + a + b, \]
where the second inequality uses $a, b \ge 2$. For the upper bound of $\frac{3}{10}$, the same argument works if we can replace $3ab$ by $\frac{10}{3} ab$ in the second inequality. This requires $\frac{2}{3} ab \ge a+b$ which indeed holds if $a, b \ge 3$. For the remaining case where, say, $a=2$ and $b \ge 3$, we observe that
\[ \frac{2b}{(b+2)(b+1)} \le \frac{3}{10} \,\,\, \Leftrightarrow \,\,\, 3(b+2)(b+1) \ge 20b \,\,\, \Leftrightarrow \,\,\, (3b-2)(b-3) \ge 0, \]
and the last inequality indeed holds for $b \ge 3$.
\end{proof}

We are now ready to prove Theorem~\ref{thm:mine}. The idea is to use induction on the size of the ground set, as Bollob\'as did in his proof of Theorem~\ref{thm:bollobas}. The induction step works the same way, and the gain should come from the induction base. There are, however, set pair systems in which some sets have size $2$ and others are larger. Such systems cannot be handled directly by reducing to smaller systems satisfying the assumptions of the theorem, and this requires some careful case analysis.

\begin{proof}[Proof of Theorem~\ref{thm:mine}] We may assume that the inequalities $|A_i| \le a_i$ and $|B_i| \le b_i$ are equalities. Indeed, any set for which the inequality is strict may be augmented using new elements that belong only to that set, without affecting the assumptions or the conclusion of the theorem.

Thus, if the theorem does not hold, then there exists a $1$-cross intersecting set pair system $\SSS = \{(A_i, B_i)\}_{i \in I}$ with $|A_i| \ge 2$ and $|B_i| \ge 2$ for every $i \in I$, such that (using our short-hand notation) $\Sigma(\SSS) > \frac{29}{30}$. Among all such counterexamples, we consider a system $\SSS$ with the smallest ground set $V(\SSS)$, and derive a contradiction.

Assume first that $|A_i| \ge 3$ for every $i \in I$. By Lemma~\ref{lem:reduction}, each reduced system of the form $\SSS[I^B_{\bar{v}}] - \{v\}$ for $v \in V(\SSS)$ is $1$-cross intersecting. By our assumption, all sets in such a reduced system are of size at least $2$, and the ground set is smaller than $V(\SSS)$. By the minimality of the system $\SSS$, we conclude that $\Sigma(\SSS[I^B_{\bar{v}}] - \{v\}) \le \frac{29}{30}$ for every $v \in V(\SSS)$. Now the second statement in Lemma~\ref{lem:induction} implies that $\Sigma(\SSS) \le \frac{29}{30}$, contradicting our assumption. Similarly, if $|B_i| \ge 3$ for every $i \in I$, then we can use the reduced systems $\SSS[I^A_{\bar{v}}] - \{v\}$ and get a contradiction from the first statement in Lemma~\ref{lem:induction}.

Henceforth we assume that there exist $k, \ell \in I$ with $|A_k| = |B_{\ell}| = 2$. We distinguish cases depending on whether $k$ and $\ell$ are distinct or not.

\paragraph{Case 1.}
There exist distinct $k, \ell \in I$ with $|A_k| = |B_{\ell}| = 2$.

Fix such $k$ and $\ell$. Since $|A_k \cap B_{\ell}| = 1$, we can write
\[ A_k = \{x,y\},\,\,\,B_{\ell} = \{x,z\}. \]
For any other $i \in I$, the set $A_i$ must contain exactly one of the elements $x$ and $z$, and the set $B_i$ must contain exactly one of $x$ and $y$. There are three distinct ways in which this can happen, so we get a partition of $I \setminus \{k,\ell\}$ into three (possibly empty) subsets $I_1, I_2, I_3$ as follows:
\begin{align*}
I_1 & = \{i \in I:\, x \in A_i,\, y \in B_i,\, z \notin A_i\}, \\
I_2 & = \{i \in I:\, x \in B_i,\, y \notin B_i,\, z \in A_i\}, \\
I_3 & = \{i \in I:\, x \notin A_i \cup B_i,\, y \in B_i,\, z \in A_i\}.
\end{align*}
There cannot exist both an $i \in I_2$ with $y \in A_i$ and a $j \in I_1$ with $z \in B_j$, because that would imply that $\{y,z\} \subseteq A_i \cap B_j$. Without loss of generality, we assume that $y \notin A_i$ for all $i \in I_2$, and rewrite
\[ I_2 = \{i \in I:\, x \in B_i,\, y \notin A_i \cup B_i,\, z \in A_i\}. \]
In the subsystem $\SSS[I_1]$ neither $x$ nor $y$ appears in any cross intersection. Hence, by Lemma~\ref{lem:reduction}, the reduced system $\SSS[I_1] - \{x,y\}$ is $1$-cross intersecting (actually, we only use that it is cross intersecting). By Theorem~\ref{thm:bollobas} we have
\[ \Sigma(\SSS[I_1] - \{x,y\}) = \sum_{i \in I_1} \frac{1}{{|A_i \setminus \{x\}| + |B_i \setminus \{y\}| \choose |A_i \setminus \{x\}|}} \le 1. \]
Using this and Lemma~\ref{lem:ratio}, we find that
\begin{equation} \label{eq:I1}
\Sigma(\SSS[I_1]) = \sum_{i \in I_1} \frac{1}{{|A_i| + |B_i| \choose |A_i|}} \le \frac{1}{3} \sum_{i \in I_1} \frac{1}{{|A_i| + |B_i| - 2 \choose |A_i| - 1}} \le \frac{1}{3}.
\end{equation}

Similarly, by our assumption on $I_2$, in the subsystem $\SSS[I_2 \cup I_3]$ none of $x$, $y$ and $z$ appears in any cross intersection. Hence, by Lemma~\ref{lem:reduction}, the reduced system $\SSS[I_2 \cup I_3] - \{x,y,z\}$ is $1$-cross intersecting. By Theorem~\ref{thm:bollobas} we have
\[ \Sigma(\SSS[I_2 \cup I_3] - \{x,y,z\}) = \sum_{i \in I_2 \cup I_3} \frac{1}{{|A_i| -1 + |B_i| - 1 \choose |A_i| - 1}} \le 1. \]
Again by Lemma~\ref{lem:ratio}, this implies
\begin{equation} \label{eq:I23}
\Sigma(\SSS[I_2 \cup I_3]) = \sum_{i \in I_2 \cup I_3} \frac{1}{{|A_i| + |B_i| \choose |A_i|}} \le \frac{1}{3} \sum_{i \in I_2 \cup I_3} \frac{1}{{|A_i| + |B_i| - 2 \choose |A_i| - 1}} \le \frac{1}{3}.
\end{equation}

Using~(\ref{eq:I1}), (\ref{eq:I23}) and the obvious bounds on the terms associated with $k$ and $\ell$ we can write
\begin{equation} \label{eq:total}
\Sigma(\SSS) = \frac{1}{{|A_k| + |B_k| \choose |A_k|}} + \frac{1}{{|A_{\ell}| + |B_{\ell}| \choose |A_{\ell}|}} + \Sigma(\SSS[I_1]) + \Sigma(\SSS[I_2 \cup I_3]) \le \frac{1}{6} + \frac{1}{6} + \frac{1}{3} + \frac{1}{3} = 1.
\end{equation}
If either $|B_k|$ or $|A_{\ell}|$ is at least $3$, then the bound on the corresponding term in~(\ref{eq:total}) may be decreased from $\frac{1}{6}$ to $\frac{1}{10}$, resulting in $\Sigma(\SSS) \le \frac{14}{15} < \frac{29}{30}$, a contradiction. If either $\SSS[I_1]$ or $\SSS[I_2 \cup I_3]$ contains no pair $(A_i, B_i)$ with $|A_i| = |B_i| = 2$, then Lemma~\ref{lem:ratio} allows us to replace the $\frac{1}{3}$ in either~(\ref{eq:I1}) or (\ref{eq:I23}) by $\frac{3}{10}$, which improves the upper bound in~(\ref{eq:total}) to $\Sigma(\SSS) \le \frac{29}{30}$, again a contradiction.

Thus, we may assume that $|A_i| = |B_i| = 2$ holds for at least $4$ values of $i$, namely $k$, $\ell$, a member of $I_1$, and a member of $I_2 \cup I_3$. If there is yet another such value of $i$, then our system $\SSS$ contains $5$ pairs of sets of size $2$, which by Proposition~\ref{prop:fgk} must form two complementary $5$-cycles. But then these $5$ pairs are the entire system $\SSS$, because no set (of any size) can contain exactly one vertex of every edge of a $5$-cycle. This gives $\Sigma(\SSS) = \frac{5}{6} < \frac{29}{30}$, a contradiction.

It follows that there is a unique $p \in I_1$ such that $|A_p| = |B_p| = 2$ and a unique $q \in I_2 \cup I_3$ such that $|A_q| = |B_q| = 2$. This allows us to sharpen the bounds in~(\ref{eq:I1}) and (\ref{eq:I23}) as follows. We have
\[ \Sigma(\SSS[I_1] - \{x,y\}) = \frac{1}{2} + \sum_{i \in I_1 \setminus \{p\}} \frac{1}{{|A_i| + |B_i| - 2 \choose |A_i| - 1}} \le 1, \]
and therefore by Lemma~\ref{lem:ratio}
\[ \Sigma(\SSS[I_1]) = \frac{1}{6} + \sum_{i \in I_1 \setminus \{p\}} \frac{1}{{|A_i| + |B_i| \choose |A_i|}} \le \frac{1}{6} + \frac{3}{10} \sum_{i \in I_1 \setminus \{p\}} \frac{1}{{|A_i| + |B_i| - 2 \choose |A_i| - 1}} \le \frac{1}{6} + \frac{3}{10} \cdot \frac{1}{2} = \frac{19}{60}. \]
The same argument shows that $\Sigma(\SSS[I_2 \cup I_3]) \le \frac{19}{60}$, and plugging these two improved bounds in~(\ref{eq:total}) gives $\Sigma(\SSS) \le \frac{29}{30}$, the final contradiction in Case~1.

\paragraph{Case 2.}
There exists $k \in I$ with $|A_k| = |B_k| = 2$.

In dealing with this case, we may assume that $|A_i|$ and $|B_i|$ are at least $3$ for every $i \in I \setminus \{k\}$, otherwise we fall back on Case~1.

Our aim in this case is to apply Lemma~\ref{lem:induction} using the reduced systems of the form $\SSS[I^A_{\bar{v}}] - \{v\}$. To this end, we need to upper bound $\Sigma(\SSS[I^A_{\bar{v}}] - \{v\})$ for the various choices of $v \in V(\SSS)$. By Lemma~\ref{lem:reduction} and the minimality of $\SSS$, we know that $\Sigma(\SSS[I^A_{\bar{v}}] - \{v\}) \le \frac{29}{30}$ as long as all sets in this reduced system have size at least $2$. This is the case when $v \in A_k$, because then $k \notin I^A_{\bar{v}}$ and the sets associated with $i \ne k$ are large enough. This is also the case when $v \notin A_k \cup B_k$, because then the removal of $v$ leaves $A_k$ and $B_k$ intact.

It remains to deal with the reduced system $\SSS[I^A_{\bar{v}}] - \{v\}$ when $v \in B_k$. This system contains the pair $(A_k, B_k \setminus \{v\})$, which we may write as
\[ A_k = \{x,y\},\,\,\, B_k \setminus \{v\} = \{z\}. \]
As the system is $1$-cross intersecting by Lemma~\ref{lem:reduction}, for any other $i \in I^A_{\bar{v}}$ the set $A_i$ must contain $z$ and the set $B_i \setminus \{v\}$ must contain exactly one of $x$ and $y$. We get a partition of $I^A_{\bar{v}} \setminus \{k\}$ into two (possibly empty) subsets $I_1$ and $I_2$ as follows:
\begin{align*}
I_1 & = \{i \in I^A_{\bar{v}}:\, x \in B_i,\, y \notin B_i,\, z \in A_i\}, \\
I_2 & = \{i \in I^A_{\bar{v}}:\, x \notin B_i,\, y \in B_i,\, z \in A_i\}.
\end{align*}
In the subsystem $(\SSS[I^A_{\bar{v}}] - \{v\})[I_1]$ neither $x$ nor $z$ appears in any cross intersection. Hence by Lemma~\ref{lem:reduction}, the reduced system $(\SSS[I^A_{\bar{v}}] - \{v\})[I_1] - \{x,z\}$ is $1$-cross intersecting. By Theorem~\ref{thm:bollobas} we have
\[ \Sigma((\SSS[I^A_{\bar{v}}] - \{v\})[I_1] - \{x,z\}) = \sum_{i \in I_1} \frac{1}{{|A_i \setminus \{z\}| + |B_i \setminus \{v,x\}| \choose |A_i \setminus \{z\}|}} \le 1. \]
Noting that $|A_i| \ge 3$ and $|B_i \setminus \{v\}| \ge 2$ for every $i \in I_1$ and using Lemma~\ref{lem:ratio}, this gives
\[ \Sigma((\SSS[I^A_{\bar{v}}] - \{v\})[I_1]) = \sum_{i \in I_1} \frac{1}{{|A_i| + |B_i \setminus \{v\}| \choose |A_i|}} \le \frac{3}{10} \sum_{i \in I_1} \frac{1}{{|A_i| + |B_i \setminus \{v\}| - 2 \choose |A_i| - 1}} \le \frac{3}{10}. \]
An analogous argument gives the bound $\Sigma((\SSS[I^A_{\bar{v}}] - \{v\})[I_2]) \le \frac{3}{10}$. Thus
\[ \Sigma(\SSS[I^A_{\bar{v}}] - \{v\}) = \frac{1}{{|A_k| + |B_k \setminus \{v\}| \choose |A_k|}} + \Sigma((\SSS[I^A_{\bar{v}}] - \{v\})[I_1]) + \Sigma((\SSS[I^A_{\bar{v}}] - \{v\})[I_2]) \le \frac{1}{3} + \frac{3}{10} + \frac{3}{10} = \frac{14}{15} < \frac{29}{30}. \]
By now we know that $\Sigma(\SSS[I^A_{\bar{v}}] - \{v\}) \le \frac{29}{30}$ for every $v \in V(\SSS)$. Lemma~\ref{lem:induction} implies that $\Sigma(\SSS) \le \frac{29}{30}$, a contradiction which completes the proof.
\end{proof}

\bibliographystyle{plain}
\bibliography{cross}

\begin{thebibliography}{1}

\bibitem{B}
B.~Bollob\'{a}s.
\newblock On generalized graphs.
\newblock {\em Acta Math. Acad. Sci. Hungar.}, 16:447--452, 1965.

\bibitem{F}
Z.~F\"{u}redi.
\newblock Matchings and covers in hypergraphs.
\newblock {\em Graphs Combin.}, 4(2):115--206, 1988.

\bibitem{FGK}
Z.~F\"uredi, A.~Gy\'arf\'as, and Z.~Kir\'aly.
\newblock Problems and results on $1$-cross intersecting set pair systems.
\newblock {\em arXiv preprint 1911.03067}, 2019.

\bibitem{T94}
Zs. Tuza.
\newblock Applications of the set-pair method in extremal hypergraph theory.
\newblock In {\em Extremal problems for finite sets ({V}isegr\'{a}d, 1991)},
  volume~3 of {\em Bolyai Soc. Math. Stud.}, pages 479--514. J\'{a}nos Bolyai
  Math. Soc., Budapest, 1994.

\bibitem{T96}
Zs. Tuza.
\newblock Applications of the set-pair method in extremal problems. {II}.
\newblock In {\em Combinatorics, {P}aul {E}rd\H{o}s is eighty, {V}ol. 2
  ({K}eszthely, 1993)}, volume~2 of {\em Bolyai Soc. Math. Stud.}, pages
  459--490. J\'{a}nos Bolyai Math. Soc., Budapest, 1996.

\end{thebibliography}

\end{document}